\theoremstyle{definition}
\theoremstyle{plain}
\newtheorem{theorem}{Theorem}
\newtheorem{proposition}{Proposition}
\newtheorem{conjecture}{Conjecture}
\title{A short note on $A_\alpha$-eigenvalues for simple graphs}
\date{}
\author{Giovanni Barbarino\thanks{Departamento de Ciencias Integradas, Facultad de Ciencias Experimentales, Universidad de Huelva, campus El Carmen, Huelva 21007, Spain.  GB is member of the Research Group GNCS (Gruppo Nazionale per il
Calcolo Scientifico) of INdAM (Istituto Nazionale di Alta Matematica). 
Email: giovanni.barbarino@dci.uhu.es. } } 
\begin{document}

\maketitle
\begin{abstract}
  Given a simple graph $G$, its $A_\alpha$ matrix is a convex combination with parameter $\alpha\in [0,1]$ of its adjacency matrix and its degree diagonal matrices.  Here we compare two lower bounds presented in \cite{1} for the spectral radius of $A_\alpha$, and prove that one is better than the other when there are no isolated nodes in $G$. 
\end{abstract}

\section{Introduction}

Let $G$ be a simple graph with $n$ vertices. Let $D$ be the diagonal matrix containing the degrees of the nodes of $G$ on its diagonal, and let $A$ be the adjacency matrix of $G$. For any  $\alpha\in [0,1]$ define the $A_\alpha$ matrix (\cite{2}) associated to $G$ as
\[
A_\alpha = \alpha D + (1-\alpha) A,
\]
where $A_0\equiv A$, $A_1\equiv D$ and $A_{1/2}$ is half the signless Laplacian matrix of $G$. $A_\alpha$ is in particular a symmetric and entry-wise nonnegative matrix, thus its spectral radius $\rho(A)$ coincides with its largest nonnegative eigenvalue $\lambda_1(A)$ and also with its spectral norm $\|A\|$. 

Estimating the eigenvalues of $A_\alpha$ has been of interest to many researchers, see, among the most recent articles, \cite{3,4}. In particular,  its spectral radius $\lambda_1(A_\alpha)$  (see \cite{5})  has been given much importance since its value can be studied to guarantee the presence of specific spanning subgraphs (called factors) in $G$ (\cite{8,7,6}).

Several lower and upper bounds have been formulated throughout the years, linking the spectral radius $\lambda_1(A_\alpha)$ to the parameter $\alpha$ and the values $\Delta$, $\delta$, that are respectively  the maximum degree and the minimum degree of the graph $G$.  
In \cite{2,1,3} many bounds for $\lambda_1(A_\alpha)$ can be found, here we report two of them.

\begin{proposition}
\cite{2}   \label{bound1} If $G$ is a graph with maximum degree $\Delta$ and  $\alpha\in [0,1]$, then 
    \[
    \lambda_1(A_\alpha) \ge \frac 12 \left( \alpha(\Delta+1)+\sqrt{\alpha^2(\Delta+1)^2 + 4\Delta(1-2\alpha)}  \right).
    \]
        If $G$ is connected, the equality holds if and only if $G\cong K_{1,\Delta}$.
\end{proposition}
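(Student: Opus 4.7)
The plan is to compare $G$ to a star around a maximum-degree vertex via the Rayleigh--Ritz characterization $\lambda_1(A_\alpha) = \max_{x\neq 0} x^T A_\alpha x/\|x\|^2$, using a test vector supported on $\{v\}\cup N(v)$ for a vertex $v$ of degree $\Delta$.

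First I would solve the eigenvalue problem for $K_{1,\Delta}$ directly. By the permutation symmetry of the leaves, any Perron eigenvector of $A_\alpha(K_{1,\Delta})$ has the form $(a,b,b,\dots,b)^T$ with the center first, and the eigenvalue equation reduces to
\begin{align*}
\alpha\Delta\, a + (1-\alpha)\Delta\, b &= \lambda\, a,\\
(1-\alpha)\, a + \alpha\, b &= \lambda\, b.
\end{align*}
Eliminating $a/b$ yields the characteristic quadratic $\lambda^2 - \alpha(\Delta+1)\lambda + \Delta(2\alpha-1)=0$, whose larger root is precisely the right-hand side of the proposition; call it $L$, and let $(a_*, b_*)$ with $a_*, b_* \ge 0$ be the associated (Perron) pair, normalized so $a_*^2 + \Delta b_*^2 = 1$.

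For general $G$, pick $v$ with $d(v) = \Delta$, enumerate $N(v) = \{u_1,\dots,u_\Delta\}$, and define $x\in\mathbb{R}^n$ by $x_v = a_*$, $x_{u_i} = b_*$ and $x_w = 0$ otherwise. A direct expansion gives
\[
x^T A_\alpha x \;=\; \alpha\Bigl(\Delta a_*^2 + b_*^2\sum_{i=1}^\Delta d(u_i)\Bigr) + (1-\alpha)\Bigl(2\Delta a_* b_* + 2 b_*^2\,|E(G[N(v)])|\Bigr).
\]
Since $u_i \sim v$ forces $d(u_i)\ge 1$ and the edge-count term is non-negative (and $\alpha, 1-\alpha \ge 0$), I can drop these surplus contributions to get $x^T A_\alpha x \ge \alpha\Delta(a_*^2+b_*^2) + 2(1-\alpha)\Delta a_* b_*$, which is exactly $L$ by Step 1 (since $(a_*, b_*,\dots, b_*)$ is the eigenvector of $A_\alpha(K_{1,\Delta})$ and $\|x\|^2 = 1$). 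Thus $\lambda_1(A_\alpha) \ge x^T A_\alpha x \ge L$.

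For the equality case with $G$ connected, both inequalities in the chain must be tight. The second being tight requires $d(u_i)=1$ for every $i$ and no edges among the $u_i$, provided $b_*>0$ (which follows from the $2\times 2$ system for $\alpha<1$). The first being tight means $x$ is itself a Perron eigenvector of $A_\alpha(G)$; but $A_\alpha(G)$ is irreducible (its off-diagonal pattern matches $A$ when $\alpha<1$, and $G$ is connected), so by Perron--Frobenius that eigenvector is strictly positive, forcing $V(G) = \{v\}\cup N(v)$. Together with $d(u_i)=1$, this yields $G\cong K_{1,\Delta}$. The main obstacle is this equality analysis: one must verify $b_*>0$ (handled by inspecting the $2\times 2$ system and noting the degenerate case $\alpha=1$, where both sides collapse to $\Delta$, should be treated separately or viewed as a boundary case), and must invoke irreducibility of $A_\alpha$ to rule out test vectors with extraneous zero entries. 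The inequality itself is a direct Rayleigh-quotient computation with no genuine difficulty.
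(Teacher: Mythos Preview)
The paper does not actually supply a proof of this proposition: it is quoted from Nikiforov \cite{2} as background, and the paper's own contributions begin with Proposition~\ref{prop1}. So there is no in-paper argument to compare your proposal against.

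That said, your Rayleigh--Ritz argument is the standard and correct route. Computing $\lambda_1(A_\alpha(K_{1,\Delta}))$ via the symmetric ansatz $(a,b,\dots,b)$ gives the quadratic $\lambda^2-\alpha(\Delta+1)\lambda+\Delta(2\alpha-1)=0$, and pushing the corresponding Perron pair onto $\{v\}\cup N(v)$ in $G$ yields exactly the lower bound, since every neighbour $u_i$ of $v$ has $d(u_i)\ge 1$ and the contribution from $E(G[N(v)])$ is nonnegative. The equality discussion is also sound: for $\alpha\in[0,1)$ and connected $G$, irreducibility of $A_\alpha(G)$ forces any $\lambda_1$-eigenvector to be strictly positive, so the support condition gives $V(G)=\{v\}\cup N(v)$, while tightness of the degree bound forces each $d(u_i)=1$; together this is $K_{1,\Delta}$. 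The endpoint $\alpha=1$ indeed needs to be handled separately (both sides equal $\Delta$, and the equality characterisation among connected graphs becomes ``$G$ has a vertex of degree $n-1$'', which for $K_{1,\Delta}$ is consistent but not exclusive), so your remark that it is a degenerate boundary case is appropriate. One tacit assumption worth flagging is $\Delta\ge 1$: if $\Delta=0$ the right-hand side equals $\alpha$ while $\lambda_1(A_\alpha)=0$, so the inequality fails; this is implicit in the original source and in your construction (you need $N(v)\ne\emptyset$).
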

\begin{theorem}
    \cite{1} \label{bound2} Let $G$ be a graph of order $n\ge 2$, $\Delta$ and $\delta$ be the maximum degree and the minimum degree of $G$, respectively, and $\alpha\in [0,1]$. Then 
       \[
    \lambda_1(A_\alpha) \ge \frac 12 \left( \alpha(\Delta+\delta)+\sqrt{\alpha^2(\Delta-\delta)^2 + 4\Delta(1-\alpha)^2}  \right).
    \]
    If $G$ is connected, the equality holds if and only if $G\cong K_{1,n-1}$.
\end{theorem}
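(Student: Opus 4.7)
The plan is to apply the Rayleigh quotient characterization $\lambda_1(A_\alpha) \ge x^\top A_\alpha x / x^\top x$ with a carefully chosen test vector supported on a vertex of maximum degree and its neighborhood, essentially mimicking the Perron eigenvector of the extremal graph $K_{1,n-1}$. Pick $u \in V(G)$ with $d(u) = \Delta$ and list $N(u) = \{v_1, \ldots, v_\Delta\}$; define $x \in \mathbb{R}^n$ by $x_u = a$, $x_{v_i} = b$ for each $v_i \in N(u)$, and $x_w = 0$ for all remaining vertices, where $a, b \ge 0$ are parameters to be optimized.

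The key computation is to expand $x^\top A_\alpha x = \alpha x^\top D x + (1-\alpha) x^\top A x$ explicitly, lower-bound the degree-weighted sum over $N(u)$ using $d(v_i) \ge \delta$, and discard the nonnegative contribution from edges inside $N(u)$. After the rescaling $b = c/\sqrt{\Delta}$, one shows that the resulting lower bound on the Rayleigh quotient coincides with the Rayleigh quotient in $(a,c)$ of the symmetric $2\times 2$ matrix
\[
M = \begin{pmatrix} \alpha \Delta & (1-\alpha)\sqrt{\Delta} \\ (1-\alpha)\sqrt{\Delta} & \alpha \delta \end{pmatrix}.
\]
Optimizing over $(a,c)\neq 0$ gives $\lambda_1(A_\alpha) \ge \lambda_1(M)$, and a direct computation of the characteristic polynomial of $M$ yields exactly the right-hand side of the theorem.

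For the equality statement, assume $G$ is connected and equality holds. Tracing the chain of inequalities, one must have $d(v_i) = \delta$ for every neighbor $v_i$ of $u$, no edges inside $N(u)$, and $x$ must be a top eigenvector of $A_\alpha$. Because $G$ is connected, $A_\alpha$ is an irreducible nonnegative matrix, so Perron--Frobenius forces its top eigenvector to be strictly positive; the vanishing entries of $x$ outside $\{u\}\cup N(u)$ therefore force $V(G) = \{u\}\cup N(u)$. Combined with the independence of $N(u)$, this gives $G\cong K_{1,\Delta} = K_{1,n-1}$, and the converse is a direct spectral computation on the star.

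I expect the main difficulty to lie in the equality analysis rather than in the inequality itself: one has to isolate which of several approximation steps become tight and then combine them with the strict positivity of the Perron eigenvector to pin down the graph's structure. The boundary values $\alpha\in\{0,1\}$, where $M$ becomes diagonal and the Perron-positivity argument degenerates, should be handled by inspection but do not alter the conclusion.
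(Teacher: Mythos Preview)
The paper does not supply its own proof of this theorem; the result is simply quoted from reference~\cite{1} and used as a benchmark against Proposition~\ref{bound1}. There is therefore no argument in the paper to compare your proposal against.

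That said, your Rayleigh--quotient approach with a test vector supported on a maximum-degree vertex and its neighbourhood is the standard route to such bounds and is correct for the inequality: after dropping the nonnegative edge contributions inside $N(u)$ and using $d(v_i)\ge\delta$, the reduction to the $2\times 2$ matrix $M$ and the identification of $\lambda_1(M)$ with the claimed lower bound are both sound. One caveat on the equality analysis: at $\alpha=1$ one has $A_\alpha=D$, so $\lambda_1(A_1)=\Delta$ coincides with the stated bound for \emph{every} graph, and the ``only if'' direction of the equality characterization fails as written; your remark that the boundary case $\alpha=1$ ``does not alter the conclusion'' is therefore too optimistic. This is arguably an imprecision in the quoted theorem rather than a flaw in your method, but it is worth flagging rather than sweeping into ``handled by inspection.''
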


In \cite{1}, it is proved that the bound in Theorem \ref{bound2}  is greater than the one in Proposition \ref{bound1} for $r$-regular graphs (i.e. $\Delta=\delta=r$) . Based on computational tests, it is also empirically observed that the same relation holds for non-regular graphs with $\delta>1$. As a consequence, the following conjecture have been formulated.

\begin{conjecture}
\label{conj}  \cite{1}  Let $G$ be a simple graph, $\Delta$ its maximum degree, $\delta$ its minimum degree and $\alpha\in [0,1]$. Then 
    \[
    \alpha(\Delta + \delta) +\sqrt{\alpha^2(\Delta-\delta)^2 + 4\Delta(1-\alpha)^2}
    \ge 
     \alpha(\Delta + 1) +\sqrt{\alpha^2(\Delta+1)^2 + 4\Delta(1-2\alpha)}.
    \]\end{conjecture}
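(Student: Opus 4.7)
The plan is to prove the inequality by isolating the two square roots and squaring carefully, using the hypothesis $\delta \ge 1$ (no isolated vertices) for sign control. Abbreviating $A := \alpha^2(\Delta-\delta)^2 + 4\Delta(1-\alpha)^2$ and $B := \alpha^2(\Delta+1)^2 + 4\Delta(1-2\alpha)$, the conjectured inequality rearranges to
\[
\sqrt{A} + \alpha(\delta-1) \;\ge\; \sqrt{B}.
\]
Both sides are nonnegative under the standing hypotheses; in particular the nonnegativity of $B$ (needed for the right-hand side to even be real) follows from viewing $B$ as a quadratic in $\alpha$ whose discriminant simplifies to $-16\Delta(\Delta-1)^2 \le 0$. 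With both sides nonnegative, squaring is an equivalence.

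The key step is an algebraic identity for $B-A$. Exploiting the cancellation $(1-\alpha)^2 - (1-2\alpha) = \alpha^2$, and factoring $(\Delta+1)^2 - (\Delta-\delta)^2 - 4\Delta$ as $(\delta-1)(2\Delta - \delta - 1)$, one obtains
\[
B - A \;=\; \alpha^2(\delta-1)(2\Delta-\delta-1).
\]
Substituting this into the squared inequality, the term $\alpha^2(\delta-1)^2$ combines with $B-A$ so that both sides share a common factor $2\alpha(\delta-1) \ge 0$, which can be removed after separately disposing of the trivial equality cases $\alpha = 0$ and $\delta = 1$. What remains is the inequality $\sqrt{A} \ge \alpha(\Delta-\delta)$, which after a second squaring becomes the obvious $4\Delta(1-\alpha)^2 \ge 0$.

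I do not anticipate any genuine obstacle beyond bookkeeping: the argument is elementary algebra, and the only delicate point is ensuring that the first squaring is equivalence-preserving. This is exactly where the assumption $\delta \ge 1$ is used — without it, $\alpha(\delta-1)$ could be negative and the chain of implications would break, which is coherent with the empirical observation in \cite{1} that the conjecture was tested under the assumption $\delta > 1$. As a by-product, the proof pinpoints the equality cases of the conjecture as $\alpha \in \{0,1\}$ or $\delta = 1$, consistent with the regular-graph case already established in \cite{1}.
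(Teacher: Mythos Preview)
Your argument is correct and follows essentially the same route as the paper's Proposition~\ref{prop1}: isolate the square roots to obtain $\sqrt{A}+\alpha(\delta-1)\ge\sqrt{B}$, square (legitimate since $\delta\ge 1$), simplify the difference $B-A-\alpha^2(\delta-1)^2$ to $2\alpha^2(\Delta-\delta)(\delta-1)$, and reduce to the obvious $\sqrt{A}\ge\alpha(\Delta-\delta)$. The only cosmetic difference is that the paper first rewrites $B$ via the identity $\alpha^2(\Delta+1)^2+4\Delta(1-2\alpha)=\alpha^2(\Delta-1)^2+4\Delta(1-\alpha)^2$ (which immediately gives $B\ge 0$), whereas you verify $B\ge 0$ by a discriminant computation and factor $B-A$ directly; both paths yield the same final reduction and the same equality cases $\alpha\in\{0,1\}$ or $\delta=1$.
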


In the following, we show that the conjecture holds exactly for $\delta\ge 1$, while it is reversed for $\delta=0$. 

\section{Main Results}

First of all, notice that 
\begin{equation}
    \label{eq:sec_rad}
\alpha^2(\Delta+1)^2 + 4\Delta(1-2\alpha) = \alpha^2(\Delta-1)^2 + 4\Delta(1-\alpha)^2 \ge 0
\end{equation}
so the argument of the second square root in the bound of Proposition \ref{bound1} is never negative, and all formulas are well defined. Here we prove that for $\delta \ge 1$, Conjecture \ref{conj} holds.

\begin{proposition}
\label{prop1}    Suppose $\Delta \ge \delta \ge 1$ and $\alpha\ge 0$. Then 
        \[
    \alpha(\Delta + \delta) +\sqrt{\alpha^2(\Delta-\delta)^2 + 4\Delta(1-\alpha)^2}
    \ge 
     \alpha(\Delta + 1) +\sqrt{\alpha^2(\Delta+1)^2 + 4\Delta(1-2\alpha)}
    \]
    and the equality holds if and only if $\delta = 1$ or $\alpha = 0$ or $\alpha = 1$. 
\end{proposition}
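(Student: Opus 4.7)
The plan is to use identity \eqref{eq:sec_rad} to express both sides of the inequality as values of a single function at two different arguments, and then establish the claim by showing that function is non-decreasing. By \eqref{eq:sec_rad}, the right-hand side equals $\alpha(\Delta+1) + \sqrt{\alpha^2(\Delta-1)^2 + 4\Delta(1-\alpha)^2}$. Setting
\[
f(x) = \alpha(\Delta+x) + \sqrt{\alpha^2(\Delta-x)^2 + 4\Delta(1-\alpha)^2},
\]
the inequality becomes $f(\delta) \ge f(1)$, and since $1 \le \delta \le \Delta$ it suffices to show that $f$ is non-decreasing on $[1, \Delta]$.

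Next I would differentiate $f$, obtaining
\[
f'(x) = \alpha - \frac{\alpha^2(\Delta-x)}{\sqrt{\alpha^2(\Delta-x)^2 + 4\Delta(1-\alpha)^2}}.
\]
On $[1, \Delta]$ we have $\alpha(\Delta-x) \ge 0$, so $f'(x) \ge 0$ is equivalent, after squaring both sides, to the obvious inequality $4\Delta(1-\alpha)^2 \ge 0$. This yields $f(\delta) \ge f(1)$ and proves the bound.

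For the equality analysis, the squared inequality above is strict unless $4\Delta(1-\alpha)^2 = 0$, which (using $\Delta \ge \delta \ge 1$) forces $\alpha = 1$. The remaining degenerate case is $\alpha = 0$, for which $f(x) \equiv 2\sqrt{\Delta}$ is constant. In both of these cases equality holds for every admissible $\delta$. Otherwise $f'(x) > 0$ on $[1, \Delta)$, so $f$ is strictly increasing there and $f(\delta) = f(1)$ can happen only when $\delta = 1$. These are exactly the three equality cases listed in the proposition. The crux of the argument is the reduction via \eqref{eq:sec_rad}; once both sides are viewed as $f(\delta)$ and $f(1)$, monotonicity is a one-line squaring and there is no genuine obstacle to overcome.
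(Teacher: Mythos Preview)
Your proof is correct. The paper's argument is purely algebraic: after applying \eqref{eq:sec_rad} and cancelling $\alpha\Delta$, it squares the inequality $\alpha(\delta-1)+\sqrt{x}\ge\sqrt{y}$ (with $x,y$ the two radicands) directly, simplifies $y-x-\alpha^2(\delta-1)^2$ to $2\alpha^2(\Delta-\delta)(\delta-1)$, and finishes with the observation $\sqrt{x}\ge\alpha(\Delta-\delta)$. You instead recognise both sides as $f(\delta)$ and $f(1)$ for a single function and prove $f'\ge 0$ on $[1,\Delta]$; your derivative inequality reduces, after squaring, to the same trivial fact $4\Delta(1-\alpha)^2\ge 0$. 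So the algebraic core is identical; what differs is the packaging. Your monotonicity framing makes the equality analysis especially clean (the three cases are exactly when $f$ is constant, or when $\delta$ sits at the left endpoint), while the paper's route avoids calculus altogether and is a one-shot squaring.
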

\begin{proof}
    After a first simplification and applying \eqref{eq:sec_rad}, the thesis becomes
            \[
    \alpha(\delta-1) +\sqrt{\alpha^2(\Delta-\delta)^2 + 4\Delta(1-\alpha)^2}
    \ge 
    \sqrt{\alpha^2(\Delta-1)^2 + 4\Delta(1-\alpha)^2}.
    \]
    Since $\alpha\ge 0$ and $\delta\ge 1$, then  both sides are nonnegative and we can square everything. In order to carry on the computation, let us call
    \[
    x :=\alpha^2(\Delta-\delta)^2 + 4\Delta(1-\alpha)^2, \qquad y:= \alpha^2(\Delta-1)^2 + 4\Delta(1-\alpha)^2
    \]
    so that the thesis is equivalent to 
    \[
    \alpha^2(\delta-1)^2 + 2\alpha(\delta-1)\sqrt x + x \ge y
    \]
    or  
    \[
       2\alpha(\delta-1)\sqrt x  \ge y - x -  \alpha^2(\delta-1)^2.
    \]
    Now 
 \begin{align*}
      y - x -  \alpha^2(\delta-1)^2 &= 
      \alpha^2(\Delta-1)^2 + 4\Delta(1-\alpha)^2 - \alpha^2(\Delta-\delta)^2 - 4\Delta(1-\alpha)^2 - \alpha^2(\delta-1)^2\\
      &= \alpha^2\left[ 
      (\Delta-\delta + \delta - 1)^2  - (\Delta-\delta)^2 -(\delta-1)^2
      \right]\\
      &= 2\alpha^2(\Delta-\delta)(\delta-1),\\
 \end{align*} 
 and substituting we get
 \[
  2\alpha(\delta-1)\sqrt x  \ge 2\alpha^2(\Delta-\delta)(\delta-1).
 \]
 From the observation that  $\sqrt x \ge \alpha(\Delta-\delta)$, we conclude that the thesis holds. Moreover the two sides are equal only when $\alpha(\delta-1) =0$, giving us the conditions $\alpha=0$ and $\delta=1$, or when $\sqrt x = \alpha(\Delta-\delta)$ that happens only for $4\Delta(1-\alpha)^2 = 0$, i.e. $\alpha = 1$. 
\end{proof}

Notice that the result agrees with the observation made in \cite{1}, stating that  the two bounds are equal for $\delta=1$. 
Since Proposition \ref{prop1} requires $\delta\ge 1$, we can study what happens for $\delta = 0$ (i.e. when there are isolated nodes). Notice that in this case the graph $G$ is disconnected. In this case we can prove that Conjecture \ref{conj}
 is false and actually  the bound in Theorem \ref{bound2}  is always smaller or equal than the one in Proposition \ref{bound1}.
 
\begin{proposition}
 \label{prop2}   Suppose $\Delta \ge \delta =0$ and $\alpha\ge 0$. Then 
        \[
    \alpha(\Delta + \delta) +\sqrt{\alpha^2(\Delta-\delta)^2 + 4\Delta(1-\alpha)^2}
    \le 
     \alpha(\Delta + 1) +\sqrt{\alpha^2(\Delta+1)^2 + 4\Delta(1-2\alpha)}
    \]
    and the equality holds if and only if $\alpha = 0$, or $\alpha = 1$ and $\Delta  \ge  1$. 
\end{proposition}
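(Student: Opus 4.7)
The plan is to imitate the structure of the proof of Proposition~\ref{prop1}, but with the roles of the two sides reversed since the inequality flips. Setting $\delta = 0$ and canceling the common term $\alpha\Delta$ on both sides, the claim becomes
\[
\sqrt{\alpha^2\Delta^2 + 4\Delta(1-\alpha)^2} \;\le\; \alpha + \sqrt{\alpha^2(\Delta+1)^2 + 4\Delta(1-2\alpha)}.
\]
I would then invoke identity \eqref{eq:sec_rad} to rewrite the radicand on the right as $\alpha^2(\Delta-1)^2 + 4\Delta(1-\alpha)^2$, which puts the two square roots into a shape where they differ only in the coefficient of $\alpha^2$.

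With the shorthand $x := \alpha^2\Delta^2 + 4\Delta(1-\alpha)^2$ and $y := \alpha^2(\Delta-1)^2 + 4\Delta(1-\alpha)^2$, the goal is $\sqrt x - \sqrt y \le \alpha$. Since both sides of the original inequality are nonnegative for $\alpha \in [0,1]$, I would rationalize via $\sqrt x - \sqrt y = (x-y)/(\sqrt x + \sqrt y)$ rather than squaring directly (squaring would produce a cross term $2\alpha\sqrt y$ which is messier). The difference is $x - y = \alpha^2(2\Delta - 1)$, so after clearing the denominator the claim reduces to
\[
\alpha(2\Delta - 1) \;\le\; \sqrt x + \sqrt y,
\]
once we handle the trivial case $\alpha = 0$ separately.

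The key bound is then the elementary observation $\sqrt x \ge \alpha\Delta$ and $\sqrt y \ge \alpha|\Delta-1|$, which in the regime $\Delta \ge 1$ gives $\sqrt x + \sqrt y \ge \alpha\Delta + \alpha(\Delta-1) = \alpha(2\Delta-1)$, exactly what we need. For $\Delta = 0$ the inequality degenerates to $-\alpha \le 2\sqrt{(1-\alpha)^2\cdot 0}$ wait, one must just check it directly, noting that the RHS of the original inequality is $\alpha + \alpha = 2\alpha$ while the LHS is $0$.

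The only subtle part is the equality analysis. Equality propagates back only if $\sqrt x = \alpha\Delta$ \emph{and} $\sqrt y = \alpha(\Delta-1)$, both of which force $4\Delta(1-\alpha)^2 = 0$, i.e., $\alpha = 1$ or $\Delta = 0$. Combined with the trivial equality at $\alpha = 0$, and a direct check showing that $\alpha = 1$, $\Delta = 0$ does \emph{not} give equality (the two sides are $0$ and $2$), the stated characterization $\alpha = 0$ or $(\alpha = 1 \text{ and } \Delta \ge 1)$ follows. The main obstacle I anticipate is not the inequality itself (which is quite clean once equation \eqref{eq:sec_rad} is used) but making sure the degenerate $\Delta = 0$ corner is correctly separated in the equality discussion.
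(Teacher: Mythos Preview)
Your argument is correct and follows the same overall reduction as the paper: substitute $\delta=0$, cancel $\alpha\Delta$, rewrite the right-hand radicand via \eqref{eq:sec_rad}, and finish with the elementary bound $\sqrt{\alpha^2 k^2 + 4\Delta(1-\alpha)^2}\ge \alpha|k|$. The one tactical difference is that you rationalize $\sqrt x-\sqrt y$ and then need \emph{two} lower bounds, $\sqrt x\ge\alpha\Delta$ and $\sqrt y\ge\alpha(\Delta-1)$, together with a separate check at $\Delta=0$; the paper instead squares (the step you avoided as ``messier'') and lands directly on the single inequality $2\alpha^2(\Delta-1)\le 2\alpha\sqrt y$, which is immediate from $\sqrt y\ge \alpha|\Delta-1|$ and covers all $\Delta\ge 0$ at once. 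So squaring is in fact the cleaner route here, though both arrive at the same equality analysis ($4\Delta(1-\alpha)^2=0$ together with $\Delta\ge 1$, plus the trivial case $\alpha=0$).
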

\begin{proof}
   By imposing $\delta =0$ and applying \eqref{eq:sec_rad}, the thesis becomes    \[
     \sqrt{\alpha^2\Delta^2 + 4\Delta(1-\alpha)^2}
    \le 
     \alpha +\sqrt{\alpha^2(\Delta-1)^2 + 4\Delta(1-\alpha)^2}.
    \]
Since $\alpha\ge 0$, we can square both sides and obtain the following equivalent relations:
\[
     \alpha^2\Delta^2 + 4\Delta(1-\alpha)^2
    \le 
    \alpha^2 + 
     2\alpha \sqrt{\alpha^2(\Delta-1)^2 + 4\Delta(1-\alpha)^2} + 
     \alpha^2(\Delta-1)^2 + 4\Delta(1-\alpha)^2
    \]
\[
  \alpha^2
  [ \Delta^2  - 1 -  (\Delta-1)^2]
    \le 
      2\alpha \sqrt{\alpha^2(\Delta-1)^2 + 4\Delta(1-\alpha)^2}
\]
\[
  2\alpha^2  (\Delta-1)
    \le 
      2\alpha \sqrt{\alpha^2(\Delta-1)^2 + 4\Delta(1-\alpha)^2}
\]
The last relation is always true since $ 4\Delta(1-\alpha)^2\ge 0$. 
For $\alpha=0$ the two sides are equal, otherwise we can simplify and obtain that
\[
 \alpha (\Delta-1)
    = 
      \sqrt{\alpha^2(\Delta-1)^2 + 4\Delta(1-\alpha)^2}.
\]
The last relation holds only when $4\Delta(1-\alpha)^2 = 0$ and $\Delta \ge 1$ are both satisfied, so we need $\alpha = 1$ and $\Delta \ge 1$. 
\end{proof}

We can then put together Proposition \ref{prop1} and \ref{prop2} to find all the cases when the bound in Theorem \ref{bound2}  is greater than the one in Proposition \ref{bound1}, and when they are equal.

 \begin{theorem}
   Let $f(\delta,\Delta,\alpha)$ be the bound in Theorem \ref{bound2}  and let $g(\Delta,\alpha)$ be the bound in  Proposition \ref{bound1}, where $\alpha\ge 0$ and $\Delta\ge\delta$ are nonnegative integers. We have that
\begin{itemize}
\item $f(\delta,\Delta,\alpha)> g(\Delta,\alpha) \iff \delta\ge2,\ \alpha\ne 0,1$,
\item $f(\delta,\Delta,\alpha)= g(\Delta,\alpha)$ if and only if one of the following hold
\begin{itemize}
    \item $\alpha = 0$,
    \item $\alpha= 1$ and $\Delta\ne 0$,
    \item $\delta=1$,
\end{itemize}
\item$f(\delta,\Delta,\alpha)< g(\Delta,\alpha)$  if and only if one of the following hold
\begin{itemize}
    \item $\delta=0$ and $\alpha\ne 0,1$,
    \item $\Delta=0$ and $\alpha\ne 0$.
\end{itemize}
     \end{itemize}
\end{theorem}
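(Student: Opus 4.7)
The plan is to combine Propositions \ref{prop1} and \ref{prop2} by case analysis on $\delta$, and then check that the equality and strict inequality cases from each proposition partition exactly into the three listed conditions. Since Proposition \ref{prop1} covers the range $\delta \geq 1$ and Proposition \ref{prop2} covers $\delta = 0$, together they dispose of every pair $(\delta,\Delta)$ with $\Delta \geq \delta \geq 0$, and no separate computation is required.

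First I would treat $\delta \geq 1$. When $\delta = 1$, Proposition \ref{prop1} yields equality unconditionally, which is exactly one of the listed equality conditions. When $\delta \geq 2$, Proposition \ref{prop1} gives $f \geq g$ with equality precisely when $\alpha \in \{0,1\}$; this produces the strict inequality $f > g$ whenever $\alpha \neq 0,1$, matching the first bullet point. In the two boundary cases the equality is absorbed either into the "$\alpha = 0$" clause or into the "$\alpha = 1$ and $\Delta \neq 0$" clause, noting that $\Delta \geq \delta \geq 2$ automatically forces $\Delta \neq 0$.

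Next I would treat $\delta = 0$, where Proposition \ref{prop2} gives $f \leq g$ with equality iff $\alpha = 0$, or $\alpha = 1$ together with $\Delta \geq 1$. When $\Delta \geq 1$, equality arises exactly at $\alpha \in \{0,1\}$ and is absorbed into the respective equality buckets; elsewhere $f < g$, matching "$\delta = 0$ and $\alpha \neq 0,1$". The delicate subcase is $\Delta = 0$ (the edgeless graph): here Proposition \ref{prop2}'s equality clause requires $\alpha = 0$, so $f < g$ for every $\alpha \neq 0$, and this is precisely the bucket "$\Delta = 0$ and $\alpha \neq 0$". The main obstacle is simply this bookkeeping of boundary cases, in particular making sure that the $\alpha = 1$ case at $\Delta = 0$ is correctly excluded from Proposition \ref{prop2}'s equality clause and instead contributes to the strict inequality region. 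Once every triple $(\delta,\Delta,\alpha)$ is accounted for exactly once, the three listed characterizations follow immediately.
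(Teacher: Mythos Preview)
Your argument is correct and follows essentially the same approach as the paper: both combine Propositions~\ref{prop1} and~\ref{prop2} via a case split on whether $\delta\ge 1$ or $\delta=0$, and then carry out the bookkeeping of equality and strict-inequality cases. The only cosmetic difference is that the paper organizes the proof by treating the three relations $f>g$, $f=g$, $f<g$ in turn, whereas you organize it by the value of $\delta$ first; the content is the same.
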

\begin{proof}
  Let us call $f=f(\delta,\Delta,\alpha)$ and $g= g(\Delta,\alpha)$ for brevity.  Notice that Proposition \ref{prop1} covers the case $\delta\ge 1$ and Proposition \ref{prop2} covers the case $\delta=0$, so together we have a complete description of the relations between $f$ and $g$. 

  In particular, by Proposition \ref{prop2} we have $\delta = 0 \implies f\le g$, so  $ f> g \implies \delta\ge 1$ and 
   by Proposition \ref{prop1}  in this case $f\ge g$ with equality only in the cases $\delta=1$ or $\alpha =0$ or $\alpha = 1$. 
   We conclude that 
$$     f>g\iff 
(\delta\ne 1,\ \alpha \ne 0,1 )
    \wedge (\delta\ge 1)\iff \delta \ge 2,\ \alpha \ne 0,1 .$$
    
The equality conditions given in Proposition \ref{prop1} and Proposition \ref{prop2} are 
\begin{enumerate}
    \item $\delta = 1$ (Proposition \ref{prop1}),
    \item $\delta \ge 1$, $\alpha = 0$ (Proposition \ref{prop1}),
    \item $\delta \ge 1$, $\alpha = 1$ (Proposition \ref{prop1}),
    \item $\delta =0$, $\alpha = 0$ (Proposition \ref{prop2}),
    \item $\delta =0$, $\alpha = 1$, $\Delta \ne 0$ (Proposition \ref{prop2}).
\end{enumerate}
Conditions number 2. and 4. can be joined into  $\alpha=0$. 
Conditions number 3. and 5. can be joined into $\alpha=1$, $\Delta \ne 0$. 
This proves that $f=g$ if and only if one of the conditions in the thesis is met. 

Notice now that if $\delta\ge 1$ then $f\ge g$ by Proposition \ref{prop1}, so $f<g$ if and only if $\delta=0$ and the equality conditions 4. and 5. above are not met, i.e.
$$     f<g\iff 
 (\delta=0) \wedge
(\alpha \ne 0 )\wedge 
[(\Delta = 0) \vee (\alpha \ne 1)] 
   \iff 
    (\alpha \ne 0, \Delta =0 )\vee (\delta= 0, \alpha \ne 0, 1 ).$$
\end{proof}



\begin{thebibliography}{99}
%
	\footnotesize
%

\bibitem{8}
C. Jia and Y. Lu. "The existence of even factors based on the $A_\alpha$-spectral radius of graphs", Arxiv (2025).


\bibitem{7}
X. Lv, J. Li and S.-J. Xu. "The $A_{\alpha }$-spectral radius for $\{P_{2}, C_{3}, P_{5}, \mathcal {T}(3)\}$-factors in graphs", 
\textit{Comp. Appl. Math.} 44, 263 (2025).

\bibitem{2}
V. Nikiforov, "Merging the A- and Q-spectral theories", Appl. Anal. Discrete Math. 11 (2017), no. 1, 81–107.


\bibitem{5}
S. Pirzada, "Two upper bounds on the $A_\alpha$-spectral radius of a connected graph", Commun. Comb. Optim. 7 (2022), no. 1, 53–57.


\bibitem{1}
J. D. G. Silva Jr., C. S. Oliveira and L. M. G. C. Costa. "Some results involving the $A_\alpha$-eigenvalues for graphs and line graphs" \textit{Special Matrices}, vol. 12, no. 1 (2024).


\bibitem{3}
J. D. G. Silva Jr., C. S. Oliveira and L. M. G. C. Costa.  "Bounds on $A_\alpha$-eigenvalues using graph invariants", Arxiv (2025). 

\bibitem{4}
G.-X. Tian, J.-X. Wu, S.-Y. Cui. "Construction of graphs with distinct $A_\alpha$-eigenvalues", Discrete Mathematics, Volume 347, Issue 2 (2024).

\bibitem{6}
Y. Zhang and S. Zhou. "An $A_\alpha$-spectral radius for the existence of $\{P_3, P_4, P_5\}$-factors in graphs", Arxiv (2025). 


\end{thebibliography}
\end{document}